\setlist{nolistsep}
\theoremstyle{plain}
\newtheorem{theorem}{Theorem}
\newtheorem{lemma}[theorem]{Lemma}
\theoremstyle{definition}
\newcommand{\arxiv}[1]{\href{http://arxiv.org/abs/#1}{\texttt{arXiv:#1}}}
\begin{document}

\title{An improved bound on $(A+A)/(A+A)$.}
\author{Ben Lund \thanks{Supported by NSF grant CCF-1350572.}
\date{}
}

\maketitle

\begin{abstract}
We show that, for a finite set $A$ of real numbers, the size of the set
$$\frac{A+A}{A+A} = \left\{ \frac{a+b}{c+d} : a,b,c,d \in A, c+d \neq 0 \right \}$$
is bounded from below by
$$\left|\frac{A+A}{A+A} \right| \gg \frac{|A|^{2+1/4}}{|A / A|^{1/8} \log |A|}.$$
This improves a result of Roche-Newton (2016).

\end{abstract}

\section{Introduction}

Given a finite set $A$ of real numbers, we define its \textit{sum set} to be
$$A+A = \{a+b : a,b \in A\},$$
its \textit{product set} to be
$$AA = \{ab : a, b \in A\},$$
and its \textit{quotient set} to be
$$A/A = \{a/b : a,b \in A\}.$$

Erd\H{o}s and Szemer\'edi \cite{erdos1983sums} conjectured that, for any $\epsilon > 0$,\footnote{
The notation $f(n) \gg g(n)$ indicates that $f(n)$ is bounded below by a constant times $g(n)$.}
$$\max(|AA|, |A+A|) \gg |A|^{2- \epsilon}.$$

In this paper we study the related question of establishing lower bounds on the size of the set
$$\frac{A+A}{A+A} = \left\{ \frac{a+b}{c+d} : a,b,c,d \in A, c+d \neq 0 \right \}.$$

In \cite{balog2015new}, Balog and Roche-Newton showed that, if $A$ is a set of strictly positive reals,
$$\left | \frac{A+A}{A+A} \right | \geq 2|A|^2 - 1.$$
This is completely sharp, as shown by the set $A = \{1,2,3\}$.

In \cite{roche2014bound}, Roche-Newton and Zhelezov conjectured that
\begin{equation}\label{eq:smallA+A}\left | \frac{A+A}{A+A} \right | \ll |A|^2 \Rightarrow |A+A| \ll |A|.\end{equation}
Shkredov \cite{shkredov2016difference} has made some progress in this direction, showing that
\begin{equation}\left |\frac{A-A}{A-A} \right| \ll |A|^2 \Rightarrow |A-A| \ll |A|^{2-1/5} \log ^{3/10}|A|.
\end{equation}
In \cite{roche2016if}, Roche-Newton proved that
\begin{equation}\label{eq:OliversResult}\left | \frac{A+A}{A+A} \right | \gg \frac{|A|^{2+2/25}}{|A/A|^{1/25} \log |A|},\end{equation}
which implies that
\begin{equation}\label{eq:largeAA} \left | \frac{A+A}{A+A} \right | \ll |A|^2 \Rightarrow |A/A| \gg \frac{|A|^2}{ \log^{25}|A|}.\end{equation}
It is  known that (\ref{eq:largeAA}) is implied by (\ref{eq:smallA+A}), for example by work of Li and Shen \cite{li2010sum}, but the reverse implication does not hold (even if the extra $\log^{-25}|A|$ term were removed).

This paper gives the following improvement to (\ref{eq:OliversResult}) and (\ref{eq:largeAA}).
\begin{theorem}\label{th:main}
Let $A$ be a finite set of real numbers. Then
\begin{equation}\label{eq:main}\left|\frac{A+A}{A+A} \right| \gg \frac{|A|^{2+1/4}}{|A / A|^{1/8} \log |A|}.\end{equation}
Consequently,
\begin{equation}\label{eq:consequence}\left | \frac{A+A}{A+A} \right | \ll |A|^2 \Rightarrow |A/A| \gg \frac{|A|^2}{ \log^{8}|A|}.\end{equation}
\end{theorem}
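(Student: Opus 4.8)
The plan is to bound $Q := \frac{A+A}{A+A}$ from below by controlling a multiplicative energy of the sumset $A+A$, and to estimate that energy with the Szemer\'edi--Trotter theorem, letting $|A/A|$ enter through the multiplicative structure of $A$. I would begin with a Cauchy--Schwarz reduction. Writing $r(x) = |\{(a,b)\in A^2 : a+b=x\}|$ and $N(\lambda) = |\{(a,b,c,d)\in A^4 : a+b = \lambda(c+d)\}|$, the map $(a,b,c,d)\mapsto (a+b)/(c+d)$ shows $\sum_{\lambda\in Q} N(\lambda) = |A|^4 - O(|A|^3) \gg |A|^4$, since only $O(|A|^3)$ quadruples have $c+d=0$. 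Setting
\[
E := \sum_{\lambda\in Q} N(\lambda)^2 = \big|\{(a,b,c,d,a',b',c',d')\in A^8 : (a+b)(c'+d') = (a'+b')(c+d)\}\big|,
\]
Cauchy--Schwarz gives $|Q| \gg |A|^8/E$. Thus \eqref{eq:main} reduces to the energy bound $E \ll |A|^{23/4}\,|A/A|^{1/8}\log|A|$, and once \eqref{eq:main} holds, \eqref{eq:consequence} follows by substituting the hypothesis $|Q|\ll|A|^2$ and solving for $|A/A|$.

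To bound $E$ I would dyadically decompose $A+A$ by additive popularity: for each dyadic $\Delta$ put $P_\Delta = \{x\in A+A : \Delta \le r(x) < 2\Delta\}$, so that $\sum_x r(x) = |A|^2$ forces $|P_\Delta| \ll |A|^2/\Delta$, and only $O(\log|A|)$ scales matter (this is the source of the $\log|A|$ factor). On each scale the contribution to $E$ is, up to the weights, a point--line incidence count. The naive homogeneous coordinates -- points $(c+d,\,c'+d')$ and lines $(a+b)Y=(a'+b')X$ -- make every line pass through the origin, so Szemer\'edi--Trotter is useless in that form; the essential idea is therefore to pivot on individual elements of $A$ rather than on whole sums, which breaks the concurrency and produces a family of \emph{affine} lines whose slopes are ratios of elements of $A$. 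The number of available slopes is then governed by $|A/A|$, and their richness by the multiplicative energy $E^\times(A)\ge |A|^4/|A/A|$; this is precisely how the factor $|A/A|^{1/8}$ enters.

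The main obstacle is the passage from the weighted octuple count to an unweighted incidence count to which Szemer\'edi--Trotter applies: a single point--line incidence is realised by many quadruples, so the multiplicities coming from $r$ and from the representation function of $A/A$ must be organised carefully, balancing the dyadic parameter $\Delta$ against $E^\times(A)$. Extracting the sharp exponents $2+\tfrac14$ and $\tfrac18$ -- rather than the weaker $2+\tfrac{2}{25}$ and $\tfrac{1}{25}$ of \eqref{eq:OliversResult} -- will hinge on an optimised form of this incidence estimate and on arranging the decomposition so that the worst dyadic scale is as favourable as possible. I expect this weighted bookkeeping and exponent optimisation, rather than any single tool, to be the delicate part of the argument.
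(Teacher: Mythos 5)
Your opening reductions are fine: the Cauchy--Schwarz step $|Q|\ge\bigl(\sum_{\lambda}N(\lambda)\bigr)^{2}/\sum_{\lambda}N(\lambda)^{2}\gg|A|^{8}/E$ is valid for $Q=(A+A)/(A+A)$, and your derivation of \eqref{eq:consequence} from \eqref{eq:main} is the same trivial substitution the paper uses. But everything after that is a plan rather than a proof, and the plan routes the entire theorem through a single unproved estimate, namely the upper bound $E\ll|A|^{23/4}\,|A/A|^{1/8}\log|A|$ on the weighted multiplicative energy of $A+A$. That estimate \emph{is} the theorem in your formulation, and the mechanism for it is exactly what is missing: ``pivot on individual elements of $A$'' and ``organise the multiplicities carefully'' are placeholders where a concrete incidence construction has to go, and none is given. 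Moreover the target is brutally tight: for $A=\{N+1,\dots,N+n\}$ with $N$ large one has $N(1)=E^{+}(A)\sim n^{3}$, hence $E\ge N(1)^{2}\sim n^{6}$, while your required bound reads $n^{23/4}\cdot(n^{2})^{1/8}\log n=n^{6}\log n$; so any loss of even a tiny power of $n$ in the dyadic and weighted bookkeeping you defer destroys the argument. No bound of this strength on the multiplicative energy of a sum set is known, and the lossiness of the initial Cauchy--Schwarz is precisely why this line of work (Solymosi, Konyagin--Shkredov, Roche-Newton, and the present paper) avoids energies altogether.

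The paper's argument never forms $E$. It pigeonholes $A\times A$ onto $|\Lambda|$ lines through the origin carrying $\tau\ge|A|^{2}/(2|A/A|)$ points each, groups consecutive slopes into blocks $T,U$ of size $M$, and for each pair $(i,j)\in T\times U$ chooses a random base point $a_{ij}\in P_{i}$. Solymosi's sector observation makes the slope sets arising from different blocks disjoint; a Pach--Sharir incidence bound for a family of conics $l_{ab}$ (not Szemer\'edi--Trotter for lines) controls, for each pair of translates, how many choices of base points make $a_{ij}+P_{j}$ and $a_{k\ell}+P_{\ell}$ share more than $B$ slopes; and the Lov\'asz Local Lemma makes all $M^{2}$ choices simultaneously, since each bad event depends on only $O(M^{2})$ others. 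Optimising $B=\tau/(2M^{2})$ and $M\sim\tau^{1/8}$ yields $|S|\gg|P|\tau^{1/8}$ and hence \eqref{eq:main}. If you wish to pursue your route, you must first actually prove the energy bound; I do not believe it is within reach of the tools you name.
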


In broad outline, the proof of Theorem \ref{th:main} is similar to the proof by Roche-Newton of (\ref{eq:OliversResult}).
We combine ideas of Solymosi \cite{solymosi2009bounding} and Konyagin and Shkredov \cite{konyagin2015sum}, developed in work on the Erd\H{o}s-Szemer\'edi sum-product conjecture, with an incidence bound and a probabilitistc argument.
The key difference between this proof and that of Roche-Newton is that we apply the probabilistic method in a more flexible way, which leads to a simpler proof of a stronger result.

\subsection{Acknowledgments}
I would like to thank Abdul Basit, Oliver Roche-Newton, and Adam Sheffer for useful conversations related to the material in this paper.

\section{Proof of Theorem \ref{th:main}}

The remainder of the paper is dedicated to the proof of inequality (\ref{eq:main}) Theorem \ref{th:main}; given (\ref{eq:main}), it is trivial to derive (\ref{eq:consequence}).

In section \ref{sec:setUp} we describe the general setup for the proof and fix notation.
In section \ref{sec:overview}, we give the general idea of the proof, and briefly elaborate on the key difference between this proof and that of Roche-Newton \cite{roche2016if}.
Sections \ref{sec:slopes}, \ref{sec:representatives}, and \ref{sec:combining} comprise the main body of the proof.

\subsection{Setup}\label{sec:setUp}
We assume that all of the elements of $A$ are strictly positive.
This is without loss of generality; if at least half of the elements of $A$ are positive, we consider these elements; otherwise, we multiply by $-1$ and then consider the positive elements.

Using a dyadic pigeonholing argument, we find a set $P \subset A \times A$ and a number $ |A|^2/\left(2|A/A|\right) \leq \tau \leq |A|$ such that $P$ is contained in the union of lines through the origin (in $\mathbb{R}^2$) that each contain exactly $\tau$ points, and
\begin{equation}\label{eq:sparsify}|P| \gg |A|^2 / \log |A|.\end{equation}
In more detail, $A \times A$ is contained in the union of $|A/A|$ lines through the origin, one for each ratio in $A/A$.
Note that no more than $|A|^2/2$ points in $A \times A$ are contained in lines through the origin that each contain fewer than $\tau_0 = |A|^2/\left(2|A/A|\right)$ points.
Note also that no line contains more than $|A|$ points of $A \times A$.
We partition the numbers $[\tau_0,|A|]$ into $O(\log |A|)$ dyadic intervals of the form $[2^i,2^{i+1})$, and use the pigeonhole principle to show that there is a $\tau$ such that there are at least $\Omega(|A|^2 / \log |A|)$ points $(a,b) \in A \times A$ that lie on lines through the origin that each contain between $\tau$ and $2\tau$ points.
We then choose $\tau$ arbitrary points on each of these lines, and define the resulting point set to be $P$.

Note that, if $a,b,c,d$ are elements of $A$, then the point $(a+b, c+d) \in (A \times A) + (A \times A)$ is contained in the line through the origin with slope $(c+d)/(a+b)$.
Hence, $(A \times A) + (A \times A)$ is contained in the union of $(A+A)/(A+A)$ lines through the origin.
Since $P+P$ is a subset of $(A \times A) + (A \times A)$, it suffices to show that, for the set $S$ of lines through the origin that each contain at least one point of $P+P$, we have
\begin{equation}\label{eq:goal}|S| \geq \frac{|A|^{2+1/4}}{|A/A|^{1/8} \log|A|}.\end{equation}
Once (\ref{eq:goal}) is demonstrated, the proof will be complete.

Let $\Lambda = \{\lambda_1, \lambda_2, \ldots, \lambda_{|\Lambda|}\}$ with $\lambda_1 < \lambda_2 < \ldots \lambda_{|\Lambda|}$ be the set of slopes of lines through the origin that each contain $\tau$ points of $P$.
Note that
\begin{equation}\label{eq:PinLines}|P| = \tau |\Lambda|.\end{equation}

Let $M$ be an integer parameter that we will set later.
For each $1 \leq i \leq \lfloor |\Lambda| / 2M \rfloor$, let
\begin{align*}
f_i &= 2M(i-1), \\
T_i &= \{\lambda_{f_i+1}, \lambda_{f_i + 2}, \ldots, \lambda_{f_i + M}\}, \\
U_i &= \{\lambda_{f_i+M+1}, \lambda_{f_i+M+2}, \ldots, \lambda_{f_i + 2M}\}.
\end{align*}

For the remainder of the proof, we work with an arbitrary $i$ and set $T = T_i$ and $U = U_i$.
We relabel $\lambda_{f_i + j}$ as $\lambda_{j}$, so that $\lambda_1, \lambda_2, \ldots, \lambda_M \in T$ and $\lambda_{M+1}, \lambda_{M+2}, \ldots, \lambda_{2M} \in U$.
With this relabeling, let $P_i$ be the set of points of $P$ contained in the line with slope $\lambda_i$.

\subsection{Brief overview}\label{sec:overview}
The basic idea of the proof is to show that we can select points $a_{ij} \in P_i$ for $1 \leq i \leq M$ and $M+1 \leq j \leq 2M$ so that the union of the sets $a_{ij}+P_j$ determines many slopes.
There are two basic parts of the proof.
First, we use a proof based on a geometric incidence bound to show that, for fixed $i,j,k,\ell$ with $(i,j) \neq (k,\ell)$, there are many selections of $a_{ij}, a_{k\ell}$ such that $a_{ij}+P_j$ shares few slopes with $a_{k\ell} + P_\ell$.
Second, we combine this with a probabilistic argument to show that we can select the $a_{ij}$ so that the total number of slopes is large.

The key difference between this proof and the proof of Roche-Newton in \cite{roche2016if} is that we choose a point $a_{ij}$ for each pair in $T \times U$, where Roche-Newton chose a single representative for each line in $T$.

\subsection{Bounding shared slopes}\label{sec:slopes}
For a point $p \in \mathbb{R}^2$, denote by $r(p)$ the slope of the line passing through $p$ and the origin.
For $a_{i} \in P_i$ and $a_{k} \in P_k$, let
\begin{equation}\mathcal{E}(a_{i}, P_j, a_{k}, P_\ell) = |\{(b_j, b_\ell) \in P_j \times P_\ell : r(a_{i} + b_j) = r(a_k + b_\ell)\}|.\end{equation}

The proof of the following lemma is based closely on the work of Roche-Newton \cite{roche2016if}.
\begin{lemma}\label{th:sharedSlopeBound}
Let $1 \leq i,k \leq M$ and $M+1 \leq j,\ell \leq 2M$, with at least one of $i \neq k$ and $j \neq \ell$.
The number of pairs $(a_i, a_k) \in P_i \times P_k$ such that
$$\mathcal{E}(a_{i}, P_j, a_{k}, P_\ell) \geq K$$
is bounded from above by
$$O(\tau^4/K^3 + \tau^2/K).$$
\end{lemma}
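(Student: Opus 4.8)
The plan is to translate the shared-slope count $\mathcal{E}$ into a point–curve incidence problem and then apply an incidence bound of Szemer\'edi--Trotter type. First I would fix coordinates adapted to the lines through the origin: since every element of $A$ is positive, write $P_i = \{(s, \lambda_i s) : s \in X_i\}$ and similarly for $P_j, P_k, P_\ell$, where each $X_\bullet$ is a set of $\tau$ positive reals. For $a_i = (s, \lambda_i s)$ and $b_j = (x, \lambda_j x)$ the slope from the origin is $r(a_i + b_j) = (\lambda_i s + \lambda_j x)/(s + x)$, and the positivity of all coordinates guarantees the denominators never vanish. Cross-multiplying the equation $r(a_i + b_j) = r(a_k + b_\ell)$ and simplifying, the defining condition reduces to the bilinear relation
$$(\lambda_i - \lambda_k)\, s t + (\lambda_i - \lambda_\ell)\, s y + (\lambda_j - \lambda_k)\, x t + (\lambda_j - \lambda_\ell)\, x y = 0.$$

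Second, reading this relation in the $(s,t)$-plane, for each pair $(x,y) \in X_j \times X_\ell$ it defines a curve $C_{x,y}$, namely $(\lambda_i - \lambda_k)\, st + (\lambda_i - \lambda_\ell)\, y\, s + (\lambda_j - \lambda_k)\, x\, t + (\lambda_j - \lambda_\ell)\, x y = 0$, so that $\mathcal{E}(a_i, P_j, a_k, P_\ell)$ is exactly the number of curves $C_{x,y}$ passing through the point $(s,t)$ that corresponds to $(a_i, a_k)$. Because $T$ and $U$ are disjoint sets of slopes, the cross coefficients $\lambda_i - \lambda_\ell$ and $\lambda_j - \lambda_k$ are always nonzero, so the map $(x,y) \mapsto C_{x,y}$ is injective and the family consists of $\tau^2$ distinct curves; and since at least one of $i \neq k$, $j \neq \ell$ holds, each $C_{x,y}$ is either a line or a hyperbola of the special shape $\alpha st + \beta s + \gamma t + \delta = 0$ whose $st$-coefficient $\alpha = \lambda_i - \lambda_k$ is fixed across the whole family.

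Third, I would verify that this family meets the hypotheses of the incidence bound for curves with two degrees of freedom: any two members meet in at most two points, since subtracting their equations eliminates the common $st$-term and yields a line, which meets a fixed curve in at most two points; and any two points of the plane lie on $O(1)$ common curves, since the two incidence conditions are linear in $\gamma, \beta$ and in the product $\delta$, and eliminating leaves a bounded number of admissible $(x,y)$. The number of points incident to at least $K$ of the $\tau^2$ curves is then $O(\tau^4/K^3 + \tau^2/K)$ by the dual form of the Szemer\'edi--Trotter / Pach--Sharir incidence theorem, and every pair $(a_i, a_k)$ with $\mathcal{E} \geq K$ is such a point, which gives the lemma.

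The main obstacle is the verification in the third step: unlike the cases $i = k$ or $j = \ell$, where the curves degenerate to lines and plain Szemer\'edi--Trotter applies, the generic case produces genuine hyperbolas, and one must confirm the two-degrees-of-freedom and bounded-pairwise-intersection conditions so that the curve incidence bound is available with the right exponents. I would also take care to keep the constants in the injectivity and degree-of-freedom estimates independent of $i,j,k,\ell$, since the lemma is applied uniformly over all admissible index choices.
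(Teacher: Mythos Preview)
Your proposal is correct and is essentially the paper's proof in dual form: both expand $r(a_i+b_j)=r(a_k+b_\ell)$ into the same bilinear relation and feed it to the Pach--Sharir bound for curves with two degrees of freedom, the only difference being that you let $(x,y)\in X_j\times X_\ell$ index the $\tau^2$ curves and count $K$-rich points in the $(s,t)$-plane, whereas the paper lets $(a,b)\in P_i\times P_k$ index the curves and counts $K$-rich curves over the point set $\pi_x(P_j)\times\pi_x(P_\ell)$. The paper itself observes that the two Pach--Sharir hypotheses reduce to the same system with the roles of $(\delta_1,\delta_4)$ and $(\delta_2,\delta_3)$ swapped, so the verifications coincide; your subtraction trick (eliminating the shared $st$-term to reduce pairwise intersection to line--conic, which is well-defined since $(\lambda_i-\lambda_k)(\lambda_j-\lambda_\ell)-(\lambda_i-\lambda_\ell)(\lambda_j-\lambda_k)=(\lambda_k-\lambda_\ell)(\lambda_i-\lambda_j)\neq 0$ keeps the conics irreducible) is a mild streamlining of the paper's explicit quadratic computation, and the dual condition yields at most two $(x,y)$ by the same elimination.
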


\begin{proof}

For each pair of points $(a, \lambda_i a), (b, \lambda_k b)$ in $P_i \times P_k$, we define the curve
$$l_{ab} = \{(x,y) : (\lambda_i a + \lambda_j x)(b + y) = (\lambda_k b + \lambda_\ell y)(a + x)\}.$$

For any set $Q$ of points, denote by $\pi_x(Q)$ the projection of $Q$ onto the $x$-axis; in other words,
$$\pi_x(Q) = \{a : \exists y((a, y) \in Q)\}.$$

Let \begin{align*}
\mathcal{L} &= \{l_{ab}:\left((a,\lambda_i a),(b,\lambda_k b)\right) \in P_i \times P_k\}, \\
\mathcal{P} &= \pi_x(P_j) \times \pi_x(P_\ell).
\end{align*}

Note that $(x,y) \in l_{ab}$ is equivalent to $r((a + x, \lambda_i a + \lambda_j x)) = r((b+y, \lambda_k b + \lambda_\ell y))$.
Hence, to prove the lemma, it suffices to show that there are no more than $\tau^4/K^3 + \tau^2/K$ curves from $\mathcal{L}$ that each contain at least $K$ points of $\mathcal{P}$.
To this end, we use the following result of Pach and Sharir \cite{pach1998number}.
\begin{theorem}\label{th:pachSharir}
Let $\mathcal{L}$ be a family of curves and let $\mathcal{P}$ be a set of points in the plane such that
\begin{enumerate}
\item any two distinct curves from $\mathcal{L}$ intersect in at most two points of $\mathcal{P}$,
\item for any two distinct points $p,q \in \mathcal{P}$, there exist at most two curves $\mathcal{L}$ which pass through both $p$ and $q$.
Then, for any $k \geq 2$, the set $\mathcal{L}_k \subset \mathcal{L}$ of curves that contain at least $k$ points of $\mathcal{P}$ satisfies the bound
$$|\mathcal{L}_k| \ll |\mathcal{P}|^2 k^{-3} + |\mathcal{P}|k^{-1}.$$
\end{enumerate}
\end{theorem}

Since $|\mathcal{P}|=\tau^2$, Lemma \ref{th:sharedSlopeBound} will follow directly from this theorem.
It remains to show that $\mathcal{L}$ satisfies the hypotheses of Theorem \ref{th:pachSharir}.

We will first check that two distinct curves of $\mathcal{L}$ intersect in at most two points.

Let $l_{ab}$ and $l_{a'b'}$ be two distinct curves in $\mathcal{L}$.
Their intersection is the set of all $(x,y)$ such that
\begin{align}
\label{eq:lab} (\lambda_i a + \lambda_j x)(b + y) &= (\lambda_k b + \lambda_\ell y)(a + x), \\
\label{eq:la'b'} (\lambda_i a' + \lambda_j x)(b' + y) &= (\lambda_k b' + \lambda_\ell y)(a' + x).
\end{align}

Let 
\begin{equation}\label{eq:deltas}
\delta_1 = \lambda_j-\lambda_\ell, \qquad
\delta_2 = \lambda_j - \lambda_k, \qquad
\delta_3 = \lambda_\ell- \lambda_i, \qquad
\delta_4 = \lambda_k - \lambda_i.
\end{equation}
Note that $\delta_2, \delta_3 \neq 0$, and at least one of $\delta_1 \neq 0$ or $\delta_4 \neq 0$ holds.
In addition, some simple algebra combined with the observation that $(\lambda_j - \lambda_i)(\lambda_\ell - \lambda_k) \neq 0$ shows that
\begin{equation}\label{eq:d14-23}
\delta_1 \delta_4 - \delta_2 \delta_3 \neq 0.
\end{equation}

Rearrange (\ref{eq:lab}) and (\ref{eq:la'b'}) into the form
\begin{align}
\label{eq:rlab} x(\delta_1 y+ \delta_2 b) &= a(\delta_3 y + \delta_4 b), \\
\label{eq:rla'b'} x(\delta_1 y + \delta_2 b') &= a'(\delta_3 y + \delta_4 b'). 
\end{align}

Suppose, for contradiction, that $\delta_1 y + \delta_2 b = 0$.
This implies that $\delta_1 \neq 0$, since $\delta_2, b \neq 0$.
Then we have $y = -\delta_2 \delta_1^{-1} b$, and from the right side of (\ref{eq:rlab}), we have $\delta_3 y + \delta_4 b = 0$.
From this, we conclude that $-\delta_3 \delta_2 \delta_1^{-1} + \delta_4 = 0$, which contradicts (\ref{eq:d14-23}).
Hence, $\delta_1 y + \delta_2 b \neq 0$, and by a similar argument, we conclude that $\delta_1 y + \delta_2 b' \neq 0$.

Hence, we conclude
\begin{equation}\label{eq:xcombined} a(\delta_3 y + \delta_4 b) (\delta_1 y+ \delta_2 b)^{-1} = x = a'(\delta_3 y + \delta_4 b')(\delta_1 y + \delta_2 b')^{-1}. \end{equation}

From equation (\ref{eq:xcombined}), we get the quadratic equation
\begin{equation}\label{eq:quadratic}\delta_1 \delta_3 (a - a') y^2 + \left(\delta_2\delta_3(ab' - a'b) + \delta_1 \delta_4 (ab - a'b')\right)y + \delta_2\delta_4bb'(a' - a) = 0.\end{equation}
Either there are at most two values of $y$ which give a solution to this quadratic, or all of the coefficients are zero.

Suppose that we are in this degenerate case.
If $\delta_1 \neq 0$, then the fact that the coefficient of $y^2$ is zero implies that $a = a'$.
Otherwise, $\delta_4 \neq 0$, and so the fact that the constant term is zero implies that $a = a'$.
Combining the fact that $a' = a$ with the fact that the linear term is zero, we conclude that $\delta_2 \delta_3 - \delta_1 \delta_4 = 0$, which contradicts (\ref{eq:d14-23}).

Hence, $l_{ab}$ and $l_{a'b'}$ intersect in at most two points.
Now, we show that there are at most two curves of $\mathcal{L}$ that pass through any pair of points in $\mathcal{P}$.

If $l_{xy} \in \mathcal{L}$ is a curve that that passes through $(a,b)$ and $(a',b')$, then
\begin{align}
\label{eq:labNew} (\lambda_i x + \lambda_j a)(y + b) &= (\lambda_k y + \lambda_\ell b)(x + a), \\
\label{eq:la'b'New} (\lambda_i x + \lambda_j a')(y + b') &= (\lambda_k y + \lambda_\ell b')(x + a').
\end{align}

Rearrange these equations into the form
\begin{align}
\label{eq:rlabNew} x(\delta_4 y+ \delta_3 b) &= a(\delta_2 y + \delta_1 b), \\
\label{eq:rla'b'New} x(\delta_4 y + \delta_3 b') &= a'(\delta_2 y + \delta_1 b'). 
\end{align}

These equations are the same as equations (\ref{eq:rlab}) and (\ref{eq:rla'b'}) with $\delta_4$ interchanged with $\delta_1$, and $\delta_3$ interchanged with $\delta_2$.
Since the conditions on these quantities are symmetric with regard to the pairs $\delta_1, \delta_4$ and $\delta_2, \delta_3$, we apply the previous argument to show that there are at most two solutions to this system of equations.
\end{proof}

\subsection{Choosing representatives}\label{sec:representatives}
For each pair $(i,j)$ with $1 \leq i \leq M$ and $M+1 \leq j \leq 2M$, choose $a_{ij} \in P_i$ uniformly at random.
Note that we have chosen $M^2$ points.
For each pair $a_{ij}, a_{k\ell}$ of chosen points, let $X(i,j,k,\ell)$ be the event that
$$\mathcal{E}(a_{ij}, P_j, a_{k\ell}, P_k) > B,$$
where $B$ is a parameter that we will fix later.
Applying Lemma \ref{th:sharedSlopeBound}, we find that, for each quadruple $(i,j,k,\ell)$ with at least one of $i \neq j$ and $k \neq \ell$, we have
\begin{equation}\label{eq:prB}Pr[X(i,j,k,\ell)] \ll \tau^2B^{-3} + B^{-1}.\end{equation}

In addition, note that $X(i,j,k,\ell)$ depends on $X(i',j',k',\ell')$ only if either $(i,j) = (i',j')$ or $(k,\ell) = (k',\ell')$.
Hence, $X(i,j,k,\ell)$ is independent of all but at most $2M^2$ other events.

We apply the following version of the Lov\'asz Local Lemma to bound the probability that at least one of the events occurs - see Corollary 5.1.2 in \cite{alon2015probabilistic}.

\begin{theorem}\label{th:LLL}
Let $X_1, X_2, \ldots, X_n$ be events in an arbitrary probability space. Suppose that each event $X_i$ is mutually independent from all but at most $d$ of the events $X_j$, with $i \neq j$. Suppose also that the probability of the event $X_i$ occuring is at most $p$, for all $1 \leq i \leq n$. Finally suppose that
$$ep(d+1) \leq 1.$$
Then, with positive probability, none of the events $X_1, X_2, \ldots, X_n$ occur.
\end{theorem}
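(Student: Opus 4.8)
The plan is to derive this symmetric statement from the general (asymmetric) Lov\'asz Local Lemma, which is the natural object for an inductive argument. First I would introduce weights $x_i = 1/(d+1)$ for every $i$ and verify the hypothesis of the general lemma, namely
$$Pr[X_i] \leq x_i \prod_{j}(1-x_j),$$
where the product runs over the at most $d$ indices $j \neq i$ on which $X_i$ depends. Using the elementary bound $(1+1/d)^d < e$, which gives $(1 - 1/(d+1))^d > 1/e$, the right-hand side is at least $\frac{1}{d+1}\left(1-\frac{1}{d+1}\right)^d > \frac{1}{e(d+1)}$. Since $ep(d+1) \leq 1$ yields $p \leq \frac{1}{e(d+1)}$ and $Pr[X_i] \leq p$, the displayed inequality holds for every $i$, so it suffices to establish the general lemma with this choice of weights.

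For the general lemma the heart of the matter is the claim that, for every index $i$ and every set $S$ of indices with $i \notin S$,
$$Pr\left[X_i \;\middle|\; \bigcap_{j \in S}\overline{X_j}\right] \leq x_i,$$
which I would prove by induction on $|S|$. The base case $S = \emptyset$ is immediate, since $Pr[X_i] \leq x_i\prod_j(1-x_j) \leq x_i$. For the inductive step I would partition $S$ into the set $S_1$ of indices on which $X_i$ depends and the set $S_2 = S \setminus S_1$ of indices from which $X_i$ is mutually independent, and write the conditional probability as the ratio
$$\frac{Pr\left[X_i \cap \bigcap_{j\in S_1}\overline{X_j} \;\middle|\; \bigcap_{k\in S_2}\overline{X_k}\right]}{Pr\left[\bigcap_{j\in S_1}\overline{X_j}\;\middle|\;\bigcap_{k\in S_2}\overline{X_k}\right]}.$$
The numerator is at most $Pr\left[X_i \mid \bigcap_{k\in S_2}\overline{X_k}\right] = Pr[X_i]$ by mutual independence of $X_i$ from the events indexed by $S_2$, hence at most $x_i\prod_j(1-x_j)$. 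For the denominator I would expand the conjunction over $S_1 = \{j_1,\dots,j_r\}$ by the chain rule and apply the inductive hypothesis to each factor (each conditioning set being strictly smaller than $S$), obtaining the lower bound $\prod_{l=1}^r(1-x_{j_l}) \geq \prod_j(1-x_j)$, the last product being over all neighbors of $i$. Dividing, the ratio is at most $x_i$, which closes the induction.

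Finally I would combine the factors by the chain rule,
$$Pr\left[\bigcap_{i=1}^n \overline{X_i}\right] = \prod_{i=1}^n Pr\left[\overline{X_i} \;\middle|\; \bigcap_{l<i}\overline{X_l}\right] \geq \prod_{i=1}^n(1-x_i) > 0,$$
which gives the conclusion. I expect the main obstacle to be the inductive step: one must condition on the non-neighbor events $S_2$ \emph{before} invoking independence, so that the numerator collapses to $Pr[X_i]$, and one must use that $S_1$ ranges only over neighbors of $i$ so that the denominator's product dominates $\prod_j(1-x_j)$. The reduction to the general lemma and the final telescoping are then routine manipulations of conditional probabilities.
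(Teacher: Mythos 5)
The paper does not prove this statement: Theorem \ref{th:LLL} is the symmetric Lov\'asz Local Lemma, quoted verbatim as Corollary 5.1.2 of Alon and Spencer \cite{alon2015probabilistic} and used as a black box. Your derivation --- reducing to the general (asymmetric) lemma with weights $x_i = 1/(d+1)$, checking the hypothesis via $(1-1/(d+1))^d > 1/e$, and proving the general lemma by induction on $|S|$ with the split of the conditioning set into neighbors and non-neighbors, followed by the telescoping product --- is correct and is essentially the standard proof given in that very reference, so it matches the (cited) source rather than diverging from anything in the paper.
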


Using (\ref{eq:prB}), we apply this lemma with $p = c_1(\tau^2B^{-3} + B^{-1})$ and $d = 2M^2$, and conclude that, as long as
\begin{equation}\label{eq:constraint}ec_1(\tau^2B^{-3} + B^{-1})(2M^2 + 1) \leq 1,\end{equation}
there is a positive probability that none of the events $X(i,j,k,\ell)$ occur.

\subsection{Combining the bounds}\label{sec:combining}
Let $Q$ be the union of $a_{ij} + P_j$ over all the $a_{ij}$.
Note that $Q$ is the union of $M^2$ distinct sets, each containing $\tau$ points.
Let $R$ be the slopes of lines through points of $Q$.
By inclusion-exclusion,
$$|R| \geq M^2\tau - \sum \mathcal{E}(a_{ij}, P_j, a_{k\ell}, P_k),$$
where the sum is over all $1 \leq i,k \leq M$ and $M+1 \leq j,\ell \leq 2M$ such that at least one of $i\neq k$ and $j\neq \ell$ holds.
Since each $\mathcal{E}(a_{ij},P_j, a_{k\ell}, P_\ell) < B$, we have
\begin{equation}\label{eq:R}|R| \geq M^2 \tau - BM^4.\end{equation}

We now set $B = \tau/(2M^2)$.
Returning to the constraint (\ref{eq:constraint}), we require that
$$ec_1(8M^6 \tau^{-1} + 2M^2\tau^{-1})(2M^2 + 1) \leq 1.$$
It is possible to satisfy this constraint with $M= c_2 \tau^{1/8}$, for an appropriate choice of $c_2$.

Note that the sets $R \subset S$  obtained for different choices of $T,U$ are disjoint.
Since there are $\lfloor |\Lambda| / 2M \rfloor$ choices for $T$ and $U$, we have by equation \ref{eq:R} that the number of lines through the origin that contain some point of $P+P$ is at least
$$|S| = |R| \left \lfloor |\Lambda| / 2M \right \rfloor \gg M \tau |\Lambda| \gg \tau^{1+1/8} |\Lambda|.$$
Applying equation (\ref{eq:PinLines}), we have
$$|S| \gg |P|\tau^{1/8}.$$

Combining this with equation (\ref{eq:sparsify}) and the observation that $\tau \geq |A|^2 / |A/A|$, we have inequality (\ref{eq:goal}), and the proof is complete.

\end{document}